\documentclass[12pt,reqno]{amsart}
\usepackage{amsthm}
\usepackage{booktabs}

\usepackage{amssymb}
\usepackage{graphics}   
\usepackage{tikz}
\usetikzlibrary{shapes,backgrounds,calc}
\usepackage{latexsym}
\usepackage{multicol}
\usepackage{verbatim,enumerate}
\usepackage{accents}
\usepackage{cite}
\usepackage{enumitem}
\usepackage{array}
\usepackage{mathtools}
\usepackage{dsfont}

\usepackage[colorlinks=true, linkcolor=blue, citecolor=blue, urlcolor=blue]{hyperref}
\usepackage{hyperref}
\usepackage{amsmath, amscd,url}

\usepackage{setspace}

\usepackage{pstricks}

\advance\textwidth by 1.5in 
\usepackage{geometry}
\newtheorem{thm}{Theorem}[section]
\newtheorem{cor}[thm]{Corollary}
\newtheorem{lem}[thm]{Lemma}
\newtheorem{prop}[thm]{Proposition}
\theoremstyle{definition}

\newtheorem{defn}[thm]{Definition}
\newtheorem{example}[thm]{Example}
\newtheorem{rem}[thm]{Remark}

\numberwithin{equation}{subsection}

\newenvironment{pf}{\proof}{\endproof}
\newcounter{cnt}
 \makeatletter
\def\mydggeometry{\makeatletter\dg@YGRID=1\dg@XGRID=20\unitlength=0.003pt\makeatother}
\makeatother \theoremstyle{remark}

\numberwithin{equation}{section}
\let\bwdg\bigwedge
\def\bigwedge{{\textstyle\bwdg}}

\newcommand{\thmref}[1]{Theorem~\ref{#1}}

\newcommand{\propref}[1]{Proposition~\ref{#1}}

\newcommand{\nc}{\newcommand}
\newcommand{\rnc}{\renewcommand}

\nc{\cal}{\mathcal} \nc{\goth}{\mathfrak} \rnc{\bold}{\mathbf}

\nc\bomega{{\mbox{\boldmath $\omega$}}} \nc\bpsi{{\mbox{\boldmath $\Psi$}}}
 \nc\balpha{{\mbox{\boldmath $\alpha$}}}
 \nc\bpi{{\mbox{\boldmath $\pi$}}}
 \nc\bvpi{{\mbox{\boldmath $\varpi$}}}
\nc\chara{\operatorname{ch}}

  \nc\bxi{{\mbox{\boldmath $\xi$}}}
\nc\bmu{{\mbox{\boldmath $\mu$}}} \nc\bcN{{\mbox{\boldmath $\cal{N}$}}} \nc\bcm{{\mbox{\boldmath $\cal{M}$}}} \nc\blambda{{\mbox{\boldmath
$\lambda$}}}\nc\bnu{{\mbox{\boldmath $\nu$}}}

\makeatletter
\def\section{\def\@secnumfont{\mdseries}\@startsection{section}{1}%
  \z@{.7\linespacing\@plus\linespacing}{.5\linespacing}%
  {\normalfont\scshape\centering}}
\def\subsection{\def\@secnumfont{\bfseries}\@startsection{subsection}{2}%
  {\parindent}{.5\linespacing\@plus.7\linespacing}{-.5em}%
  {\normalfont\bfseries}}
\makeatother

 \nc{\Hom}{\operatorname{Hom}}
  \nc{\mode}{\operatorname{mod}}
\nc{\End}{\operatorname{End}} \nc{\wh}[1]{\widehat{#1}} \nc{\Ext}{\operatorname{Ext}} \nc{\ch}{\text{ch}} \nc{\ev}{\operatorname{ev}}
\nc{\Ob}{\operatorname{Ob}} \nc{\soc}{\operatorname{soc}} \nc{\rad}{\operatorname{rad}} \nc{\head}{\operatorname{head}}

\def\det{\operatorname{det}}

 \nc{\Cal}{\cal} \nc{\Xp}[1]{X^+(#1)} \nc{\Xm}[1]{X^-(#1)}
\nc{\on}{\operatorname} \nc{\Z}{{\bold Z}} \nc{\J}{{\cal J}}  \nc{\Q}{{\bold Q}}

\nc{\N}{{\bold N}}  \nc\boa{\bold a} \nc\bob{\bold b} \nc\boc{\bold c} \nc\bod{\bold d} \nc\boe{\bold e} \nc\bof{\bold f} \nc\bog{\bold g}
\nc\boh{\bold h} \nc\boi{\bold i} \nc\boj{\bold j} \nc\bok{\bold k} \nc\bol{\bold l} \nc\bom{\bold m} \nc\bon{\mathbb n} \nc\boo{\bold o}
\nc\bop{\bold p} \nc\boq{\bold q} \nc\bor{\bold r} \nc\bos{\bold s} \nc\boT{\bold t} \nc\boF{\bold F} \nc\bou{\bold u} \nc\bov{\bold v}
\nc\bow{\bold w} \nc\boz{\bold z}\nc\ba{\bold A} \nc\bb{\bold B} \nc\bc{\mathbb C} \nc\bd{\bold D} \nc\be{\bold E} \nc\bg{\bold
G} \nc\bh{\bold H} \nc\bi{\bold I} \nc\bj{\bold J} \nc\bk{\bold K} \nc\bl{\bold L} \nc\bm{\bold M} \nc\bn{\mathbb N} \nc\bo{\bold O} \nc\bp{\bold
P} \nc\bq{\bold Q} \nc\br{\bold R} \nc\bs{\bold S} \nc\bt{\bold T} \nc\bu{\bold U} \nc\bv{\bold V} \nc\bw{\bold W} \nc\bz{\mathbb Z} \nc\bx{\bold
x} \nc\KR{\bold{KR}} \nc\rk{\bold{rk}} \nc\het{\text{ht }}

\nc\toa{\tilde a} \nc\tob{\tilde b} \nc\toc{\tilde c} \nc\tod{\tilde d} \nc\toe{\tilde e} \nc\tof{\tilde f} \nc\tog{\tilde g} \nc\toh{\tilde h}
\nc\toi{\tilde i} \nc\toj{\tilde j} \nc\tok{\tilde k} \nc\tol{\tilde l} \nc\tom{\tilde m} \nc\ton{\tilde n} \nc\too{\tilde o} \nc\toq{\tilde q}
\nc\tor{\tilde r} \nc\tos{\tilde s} \nc\toT{\tilde t} \nc\tou{\tilde u} \nc\tov{\tilde v} \nc\tow{\tilde w} \nc\toz{\tilde z} \nc\woi{w_{\omega_i}}

\newcommand\wrapped[1]%
{\renewcommand\arraystretch{1}%
	\begin{array}{@{}l@{}}#1\end{array}%
}

\ifTUTeX
  \usepackage{fontspec}
\else
  \usepackage[T1]{fontenc}
  \usepackage[utf8]{inputenc} 
  \DeclareUnicodeCharacter{200B}{{\hskip 0pt}}
\fi

\begin{document}
	
	\title{Zero-sum Inverse Realization and Property~(P) under
Join Operations}

   \author{G. Arunkumar}
	\address{Indian Institute of Technology Madras, Chennai, India.}
	\email{garunkumar@iitm.ac.in}
    
    \author{Anubhab Pahari}
	\address{Indian Institute of Technology Madras, Chennai, India.}
	\email{ma22d012@smail.iitm.ac.in, anubhabpahari@gmail.com}
    
	\author{Puja Samanta}
	\address{Indian Institute of Technology Madras, Chennai, India.}
	\email{ma23d004@smail.iitm.ac.in, pujasamanta1999@gmail.com}




\subjclass[2020]{05C50, 05C76, 05C38, 15A18.}

\begin{abstract}
We introduce zero-sum inverse realization of property (P) of a graph $G$, obtained by imposing an additional condition
\(
\mathbf{1}^{\top}A^{-1}\mathbf{1}=0,
\)
on a matrix $A\in S(G)$ realizing property (P), where $\mathbf{1}$ is the all-ones vector.
We prove that every graph of order at least three having property (P) admits such a zero-sum inverse realization. As applications, we prove that property~(P) is preserved under the join of two graphs of order at least $3$ and, more generally, under the $H$-join of a family of graphs of order at least $3$, where $H$ is arbitrary. Consequently, we obtain sufficient conditions for cographs and lexicographic products of graphs to possess property~(P). Throughout the paper, many examples are given.
\end{abstract}

\maketitle

\medskip

\noindent\textbf{Keywords:}
Property (P); P-vertex; indefinite quadratic form; isotropic vector; graph join; $H$-join.

\section{Introduction}

Standard graph-theoretic terminology follows \cite{west}. Let $A=(A_{ij})$ be a $n\times n$ real symmetric matrix. The support graph of $A$, denoted by $G(A)$, is the simple graph whose vertex set is $\{1,\dots,n\}$ and edge set is $\{ij \ | \ i \ne j \ \text{and} \ A_{ij}\ne 0\}$. We denote the transpose of $A$ by $A^\top$. For a graph $G$, let $$ S(G)=\{A\in\mathbb{R}^{n\times n}| \ A^\top=A\ \text{and}\ G(A)=G\}.$$  If $G$ is an acyclic graph, then the matrices in $S(G)$ are called \emph{acyclic matrices}.

For a subset $\alpha$ of the vertex set of $G$, we denote by $A(\alpha)$ the principal submatrix obtained by deleting the rows and columns indexed by $\alpha$; in particular, we denote by $A(i)$ the $(n-1)\times (n-1)$ principal submatrix of $A$ obtained by deleting the row and column indexed with $i$. 
If $m_A(\lambda)$ denotes the multiplicity of the eigenvalue $\lambda$ of $A$, from Cauchy's Interlacing Theorem \cite[Theorem 4.3.17]{inter}, we know that $$ -1 \le m_{A(i)}(\lambda) - m_{A}(\lambda) \le 1.$$
A vertex $i$ of the support graph of $A$ is called \emph{Parter, downer, or neutral} vertex of $A$ with respect to $\lambda$, depending on whether $m_{A(i)}(\lambda) - m_{A}(\lambda) = 1, -1,\ \text{or} \ 0,$ respectively \cite{Fiedler,hermitian}. In particular, when $m_{A(i)}(0) - m_{A}(0)=1$, $i$ is called \emph{P-vertex} of $A$ \cite{kim1}. The number of P-vertices of $A$ is denoted by $P_{\nu}(A)$. A graph $G$ of order $n$ is said to have \emph{full P-vertices} if there exists $A \in S(G)$ with $P_\nu(A)=n$. 


The study of P-vertices began with the work of Parter \cite{Parter}, who related graph structure, particularly for trees, to eigenvalue multiplicities of real symmetric matrices, with further developments in \cite{wiener, hermitian, kim1, kim2}. Johnson and Sutton \cite{hermitian} showed that a singular acyclic matrix of order $n$ has at most $n-2$ P-vertices, while Kim and Shader \cite{kim1} proved that even paths admit matrices with all vertices as P-vertices, whereas every matrix associated with an odd path has at most $n-1$ P-vertices. Extensive work on acyclic graphs has since established bounds and structural characterizations for the existence of matrices with many P-vertices \cite{hermitian,kim1,fons1,fons2,fons3,fons4,fons5,fons6,du1,du2,du3,du4,du5,cruz, kim2}. In particular, trees admitting matrices with the full P-vertex property and prescribed nullity have been characterized in \cite{fons2,fons5,du2}.



Beyond acyclic graphs, cycle graphs are also known to have the full P-vertex property \cite[Theorem 4.2]{fons1}. 
In 2025, property~(P) was introduced by Howlader et al., as a reformulation of the full P-vertex property for nonsingular symmetric matrices \cite[Definition~1.1]{Howlader}.

\begin{defn}\label{def:P}
A graph \(G\) on \(n\) vertices has \emph{property~(P)} if there exists a nonsingular matrix
\(A\in S(G)\) such that
\(
P_{\nu}(A)=n.
\)
\end{defn}
By Jacobi's identity
$(A^{-1})_{ii}=\det A(i)/\det A$, so
Definition~\ref{def:P} admits the equivalent analytic form that will be used
throughout:
\begin{equation*}\label{eq:hollow}
   G \text{ has property (P)} \iff \exists\, A\in S(G) \text{ nonsingular
   with } \operatorname{diag}(A^{-1})=\mathbf{0}.
\end{equation*}
They studied the property~(P) of unicyclic graphs and obtained a complete characterization of connected unicyclic graphs having property~(P) \cite[Theorem~6.1]{Howlader}.
More recently, Sharma and Panda showed that the existence of a perfect matching guarantees property~(P) in bipartite graphs \cite[Theorem~3.1]{sharma}. For trees and unicyclic bipartite graphs, they established that property~(P) is equivalent to the existence of a perfect matching \cite[Theorems~3.3 and~3.8]{sharma}. They also characterized several additional graph classes and showed that connecting two graphs with property~(P) by a single edge preserves property~(P) \cite[Theorem~5.3]{sharma}. Beyond this result, the behaviour of property~(P) under graph operations has remained largely unexplored. 
Extending this preservation result to more general graph operations is far from straightforward. Unlike the addition of a single edge, the join operation simultaneously introduces all possible edges between two vertex sets. Moreover, property~(P) is characterized by the diagonal of the inverse of a realizing matrix, while the inverse of a block matrix depends globally on all of its blocks. Consequently, the effect of such a dense graph operation cannot be understood through local edge
additions alone.

Extending preservation results from a single-edge addition to the join
and, more generally, the \(H\)-join is the main goal of this paper. 
To accomplish this, we introduce the notion of a \emph{zero-sum inverse realization} of property~(P): a nonsingular matrix \(A\in S(G)\) such that every vertex of \(G\) is a P-vertex of \(A\) and
\(
\mathbf{1}^{\top}A^{-1}\mathbf{1}=0,
\)
where \(\mathbf{1}\) denotes the all-ones vector.
Despite this additional algebraic constraint, we prove that every graph of order at least $3$ having property~(P) admits such a realization (\thmref{general}). The proof relies on a property of nondegenerate indefinite quadratic forms over $\mathbb{R}^n$, where $n$ is the order of the graph.  The essential observation is that every nondegenerate indefinite quadratic form of dimension at least $3$ admits an isotropic vector with no zero coordinates (\propref{prop: nonzero isotropic vector}), which enables the construction of the required realization through diagonal scaling.

The zero-sum inverse realization enables us to prove that property~(P) is preserved under the join of two graphs of order at least \(3\) (\thmref{join}) and, more generally, under the \(H\)-join of a family of pairwise vertex-disjoint graphs of order at least \(3\), where \(H\) is arbitrary (\thmref{thm:H-join}).
These results provide a systematic method for constructing new graphs with property~(P).
 In particular, we obtain a sufficient condition for cographs to possess property~(P), leading to complete multipartite graphs of the form
\(
K_{n_1,n_1,\ldots,n_t,n_t},
\)
and balanced Tur\'an graphs with an even number of parts. We further prove that, whenever \(G\) has property~(P), the lexicographic product \(H[G]\) also has property~(P) for every graph \(H\), providing a general method for constructing further examples.



This article is organized as follows. Section~\ref{sec:zero-sum} establishes the key property of a nondegenerate indefinite quadratic form required for the construction of zero-sum inverse realizations and proves that every graph of order at least \(3\) with property~(P) admits such a realization. Section~\ref{sec:join} establishes the preservation of property~(P) under the join operation. Finally, Section~\ref{sec:H-join} extends this result to the \(H\)-join and presents applications to cographs and lexicographic products.

\section{Zero-Sum Inverse Realization of Property (P)}\label{sec:zero-sum}

We begin by introducing the notion of zero-sum inverse realizations of property~(P).

\begin{defn}\label{def:zero_sum_realization}
Let $G$ be a graph of order $n$ having property (P). A nonsingular matrix
$A\in S(G)$ is called a \emph{zero-sum inverse realization} of property (P) if
\[
P_\nu(A)=n
\quad\text{and}\quad
\mathbf{1}^{\top}A^{-1}\mathbf{1}=0,
\]
where $\mathbf{1}=(1,1,\ldots,1)^{\top} \in \mathbb{R}^n$. Equivalently,
\(
\sum_{i,j=1}^{n}(A^{-1})_{ij}=0.
\)
\end{defn}
The following proposition explains why, throughout the paper, we restrict our attention to graphs of order at least \(3\). Observe first that a graph of order \(1\) cannot have property~(P), since the inverse of every nonsingular matrix of order \(1\) has a nonzero diagonal entry.

\begin{prop}\label{K_2}
A graph of order $2$ does not admit a zero-sum inverse realization of property (P).
\end{prop}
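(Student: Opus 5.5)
The plan is a direct computation, since a graph of order $2$ is either $K_2$ or $\overline{K_2}$ (two isolated vertices), and in each case the matrices in $S(G)$ are easy to describe explicitly.

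First I would dispose of $\overline{K_2}$. Here every $A\in S(G)$ is diagonal. A nonsingular diagonal matrix has an inverse whose diagonal entries are nonzero, so $\operatorname{diag}(A^{-1})\neq \mathbf{0}$. Thus $\overline{K_2}$ does not have property~(P) at all, and in particular admits no zero-sum inverse realization.

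For $K_2$, I would take a general
\[
A=\begin{pmatrix} a & b\\ b & c\end{pmatrix}\quad\text{with } b\neq 0 \text{ and } \det A=ac-b^2\neq 0.
\]
Then
\[
A^{-1}=\frac{1}{ac-b^2}\begin{pmatrix} c & -b\\ -b & a\end{pmatrix}.
\]
The condition that both vertices are P-vertices, $\operatorname{diag}(A^{-1})=\mathbf{0}$ via Jacobi's identity, forces $a=c=0$. Then $A^{-1}=\frac{1}{-b^2}\begin{pmatrix}0&-b\\-b&0\end{pmatrix}$, which has off-diagonal entries $1/b$. Hence
\[
\mathbf{1}^{\top}A^{-1}\mathbf{1}=\frac{2}{b}\neq 0,
\]
which contradicts the zero-sum condition of Definition~\ref{def:zero_sum_realization}.

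There is no real obstacle here. The only care needed is to include the disconnected case $\overline{K_2}$ and to correctly invoke the equivalence $P_\nu(A)=n \iff \operatorname{diag}(A^{-1})=\mathbf{0}$ for nonsingular $A$.
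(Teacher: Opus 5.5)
Your proposal is correct and follows essentially the same route as the paper: first exclude $\overline{K_2}$ because a nonsingular diagonal matrix has an inverse with nonzero diagonal, then for $K_2$ show that a realization must have zero diagonal, giving $\mathbf{1}^{\top}A^{-1}\mathbf{1}=2/b\neq 0$. The only difference is that you explicitly derive $a=c=0$ from $\operatorname{diag}(A^{-1})=\mathbf{0}$, whereas the paper simply asserts that any realizing matrix has this form.
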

\begin{pf}
Suppose that a graph $G$ of order $2$ admits a zero-sum inverse realization of property (P). Then $G$ is either the complete graph $K_2$ or its complement $\overline{K_2}$. Clearly, it cannot be $\overline{K_2}$ because any nonsingular diagonal matrix must have non-zero diagonal entries, which violates the requirement that every $1 \times 1$ principal submatrix is singular. Therefore, $G=K_2$. Since $G$ admits such a realization, it also has property~(P). Hence any matrix $A\in S(K_2)$ realizing property~(P) must be of the form
\[
A=
\begin{pmatrix}
0 & a\\
a & 0
\end{pmatrix},
\qquad a \in \mathbb{R}\setminus \{0\}.
\]
Then, 
\[
A^{-1}= \begin{pmatrix}
    0 & \frac{1}{a}\\[2mm]
    \frac{1}{a} & 0
\end{pmatrix}.
\]
The sum of the entries of $A^{-1}$ is $\frac{2}{a}$, which cannot be zero for any $a$. This contradicts the assumption. Hence, no graph of order $2$ admits this realization.
\end{pf}
\begin{example}
    The graph $C_3$ admits a zero-sum inverse realization of property (P).  Consider the matrix $A\in S(C_3)$,
   \[
\renewcommand{\arraystretch}{1.4}
A=
\begin{pmatrix}
1 & \frac{1}{2} & \frac{1}{2}\\
\frac{1}{2} & \frac{1}{4} & -\frac{1}{4}\\
\frac{1}{2} & -\frac{1}{4} & \frac{1}{4}
\end{pmatrix}
\]
$A$ is nonsingular and $\det(A(i))=0$ for all $i \in \{1,2,3\}$. Now, the inverse of $A$,
\[
A^{-1}= \begin{pmatrix}
    0 & 1 & 1\\
    1 & 0 & -2\\
    1 & -2 & 0\\
\end{pmatrix}
\]
The sum of all elements in $A^{-1}$ is zero.
\end{example}

We now prove that every graph of order at least \(3\) having property~(P) admits a zero-sum inverse realization of property~(P). The proof relies on a structural property of nondegenerate indefinite quadratic forms. We begin with the following elementary fact concerning polynomials.

\begin{lem}\label{avoidance}
For $d\ge 1$, let $f\in \mathbb{R}[x_1,\dots,x_d]$ be a nonzero polynomial. Then $f$ does not vanish on any nonempty open subset of $\mathbb{R}^d$, where $\mathbb{R}^d$ is equipped with the standard Euclidean topology.
\end{lem}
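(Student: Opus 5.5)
We argue by induction on the number of variables $d$, showing the equivalent statement: if $f$ vanishes on a nonempty open set $U\subseteq\mathbb{R}^d$, then $f$ is the zero polynomial. Since $U$ contains an open box $B=\prod_{k=1}^d (a_k,b_k)$ with $a_k<b_k$, it suffices to treat the case where $f$ vanishes on such a box.

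For the base case $d=1$, a nonzero polynomial $f\in\mathbb{R}[x_1]$ of degree $m$ has at most $m$ real roots. The interval $(a_1,b_1)$ is infinite, so $f$ cannot vanish on all of it.

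For the inductive step, assume the claim holds for $d-1$ variables. Write
\[
f(x_1,\dots,x_d)=\sum_{j=0}^{m} g_j(x_1,\dots,x_{d-1})\,x_d^{\,j},
\]
where $g_j\in\mathbb{R}[x_1,\dots,x_{d-1}]$. Fix any point $(c_1,\dots,c_{d-1})\in\prod_{k=1}^{d-1}(a_k,b_k)$. Then the one-variable polynomial $t\mapsto f(c_1,\dots,c_{d-1},t)=\sum_j g_j(c)\,t^j$ vanishes on the infinite interval $(a_d,b_d)$. By the base case it is the zero polynomial, so $g_j(c)=0$ for every $j$. Since $c$ was arbitrary, each $g_j$ vanishes on the open box $\prod_{k=1}^{d-1}(a_k,b_k)$ in $\mathbb{R}^{d-1}$. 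By the induction hypothesis each $g_j$ is the zero polynomial, and hence $f=0$. This contradicts the assumption that $f$ is nonzero.

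The argument has no real obstacle. The only point needing care is the reduction from an arbitrary open set to a box, so that each fibre in the last coordinate is a full interval. Once that reduction is made, the coefficient-extraction step goes through cleanly.
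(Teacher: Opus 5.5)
Your proof is correct and follows essentially the same route as the paper: induction on $d$, restriction to a product neighbourhood (your box $\prod(a_k,b_k)$ plays the role of the paper's $V\times I\subseteq U$), expansion in $x_d$ with coefficients in $\mathbb{R}[x_1,\dots,x_{d-1}]$, and the one-variable case forcing every coefficient to vanish on an open set in $\mathbb{R}^{d-1}$. The only difference is that you phrase the claim as ``vanishing on an open set implies $f=0$'' and conclude that all coefficients are zero, whereas the paper argues by contradiction from a single nonzero coefficient $c_{j_0}$. This is a purely cosmetic difference.
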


\begin{pf}
We prove the result by induction on $d$.

For $d=1$, the result is immediate, since a nonzero polynomial in one variable has only finitely many roots in $\mathbb{R}$. Hence it cannot vanish on a nonempty open interval.

Assume now that $d>1$, and suppose that the result holds for polynomials in at most $d-1$ variables. Write $f$ as a polynomial in the variable $x_d$,
\[
f(x_1,\dots,x_d)
=
\sum_{j=0}^{m} c_j(x_1,\dots,x_{d-1})x_d^j,
\]
where $c_j\in \mathbb{R}[x_1,\dots,x_{d-1}]$. Since $f$ is nonzero, at least one coefficient polynomial, say $c_{j_0}$, is nonzero.

Suppose, for a contradiction, that $f$ vanishes on a nonempty open set $U\subseteq \mathbb{R}^d$. Since $U$ is a nonempty open set in the Euclidean topology, there exist a nonempty open set $V\subseteq \mathbb{R}^{d-1}$ and a nonempty open interval $I\subseteq \mathbb{R}$ such that
\(
V\times I\subseteq U.
\)
Therefore,
\[
f(\bar a,x_d)=0
\quad
\text{for all } \bar a\in V \text{ and all } x_d\in I.
\]

Fix an arbitrary point $\bar a=(a_1,\dots,a_{d-1})\in V$. Consider the one-variable polynomial
\[
q(x_d)=f(\bar a,x_d)
=
\sum_{j=0}^{m} c_j(\bar a)x_d^j.
\]
Since $f$ vanishes on $V\times I$, the polynomial $q$ vanishes for all $x_d\in I$. By the one-dimensional case, $q$ must be the zero polynomial. Hence all of its coefficients are zero, that is,
\[
c_j(\bar a)=0
\quad
\text{for all } j=0,\dots,m.
\]
Since $\bar a\in V$ was arbitrary, each coefficient polynomial $c_j$ vanishes on $V$. In particular, $c_{j_0}$ vanishes on the nonempty open set $V\subseteq \mathbb{R}^{d-1}$. This contradicts the induction hypothesis, because $c_{j_0}$ is a nonzero polynomial in $d-1$ variables.

Therefore, $f$ cannot vanish on any nonempty open subset of $\mathbb{R}^d$.
\end{pf}

For a vector $y \in \mathbb{R}^n$ whose coordinates are indexed by $V(G)$, we define its \emph{support} as $\operatorname{supp}(y)=\{v \in V(G) : y_v\ne0\}$. A vector $y$ is said to be \emph{fully supported} if $\operatorname{supp}(y)=V(G)$, meaning every coordinate is nonzero.

Next, let $Q: \mathbb{R}^n \rightarrow \mathbb{R}$ be a real quadratic form. Recall that a vector $x\in\mathbb{R}^n$ is called \emph{isotropic} if $Q(x)=0$. $Q$ is called
\emph{indefinite} if it attains both positive and negative values. If $B$
denotes the symmetric bilinear form associated with $Q$, then $Q$ is called
\emph{nondegenerate} if the radical of $B$ is zero, that is,
\[
\{x\in\mathbb{R}^n:B(x,y)=0 \text{ for all } y\in\mathbb{R}^n\}=\{\mathbf{0}\}.
\]
Equivalently, the symmetric matrix representing $Q$ is nonsingular.

To construct such a realization, we require a fully supported isotropic vector. The following proposition provides such a vector.
\begin{prop}\label{prop: nonzero isotropic vector}
Let $Q$ be a nondegenerate indefinite quadratic form on $\mathbb{R}^n$ with
$n\ge3$. Then $Q$ admits a fully supported isotropic vector $x \in \mathbb{R}^n$.
\end{prop}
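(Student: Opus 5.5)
We plan to exploit the smoothness of the isotropic cone $\{x:Q(x)=0\}$ away from the origin, and then use Lemma~\ref{avoidance} to escape the coordinate hyperplanes. Write $Q(x)=x^{\top}Mx$ with $M$ symmetric and nonsingular. Since $Q$ is indefinite, pick $u,v$ with $Q(u)>0$ and $Q(v)<0$. The segment from $u$ to $v$ avoids $\mathbf{0}$ after a small perturbation of $u$. By the intermediate value theorem, it contains a point $x_0\neq\mathbf{0}$ with $Q(x_0)=0$. The gradient there is $\nabla Q(x_0)=2Mx_0\neq\mathbf{0}$, because $M$ is nonsingular; this is exactly where nondegeneracy is used.

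By the implicit function theorem, near $x_0$ the zero set is a smooth hypersurface. After reordering coordinates so that $(Mx_0)_n\neq 0$, we get an open set $W\subseteq\mathbb{R}^{n-1}$ and a real-analytic (indeed algebraic) function $g$ on $W$ with $Q(y,g(y))=0$ for all $y\in W$.

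A cleaner, fully polynomial route avoids the analytic parametrization. Diagonalize $Q$ over $\mathbb{R}$ and rationally parametrize its cone: the projection from an isotropic point $x_0$ gives a map $\phi(t)=Q(t)x_0-2B(x_0,t)\,t$. This satisfies
\[
Q(\phi(t))=Q(t)^2Q(x_0)-4Q(t)B(x_0,t)^2+4B(x_0,t)^2Q(t)=0
\]
for every $t\in\mathbb{R}^n$, since $Q(x_0)=0$. Each coordinate $\phi_i(t)$ is a polynomial in $t$. It then suffices to show that each $\phi_i$ is a nonzero polynomial. Then $f=\prod_i\phi_i$ is nonzero, so by Lemma~\ref{avoidance} it does not vanish on all of $\mathbb{R}^n$, and any $t$ with $f(t)\neq0$ gives a fully supported isotropic vector $x=\phi(t)$.

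The main obstacle is showing $\phi_i\not\equiv 0$ for every $i$; this is where $n\ge 3$ must enter. If $(x_0)_i\neq0$, then $\phi_i$ contains the term $Q(t)(x_0)_i$, which has quadratic part $(x_0)_iQ(t)$. Since $Q$ is nondegenerate, $Q(t)$ is not divisible by the linear form $t_i$ when $n\ge 2$, so $\phi_i=(x_0)_iQ(t)-2B(x_0,t)t_i$ is nonzero. If $(x_0)_i=0$, then $\phi_i=-2B(x_0,t)t_i$, which is nonzero because $B(x_0,\cdot)\neq0$, again by nondegeneracy.

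In fact the first case needs only $n\ge2$ together with the non-divisibility, so I would double-check exactly how $n\ge3$ is used. For $n=2$, take $Q=x_1x_2$: its isotropic vectors $x_0$ are coordinate axes, and $\phi$ still gives zeros. Indeed $Q(t)=t_1t_2$ is divisible by $t_1$, so the non-divisibility argument is where care is needed. So for $n\ge3$ I will justify it as follows. A nonsingular quadratic form in $n\ge3$ variables is irreducible over $\mathbb{R}$, or at least is not $t_i\ell(t)$, because $t_i\ell(t)$ has rank at most $2$ while $M$ has rank $n\ge3$. That settles the obstacle and completes the plan.
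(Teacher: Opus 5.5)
Your proof is correct, and it takes a genuinely different route from the paper's.

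The paper works coordinatewise. It writes $Q(x',x_n)=Q(x',0)+2\ell(x')x_n+cx_n^2$ with $c=Q(e_n)$, and uses nondegeneracy and $n\ge3$ to show that the coordinate hyperplane is not totally isotropic, so $Q(x',0)\not\equiv0$. It then splits into two cases. For $c\neq0$, indefiniteness makes the discriminant $\ell(x')^2-cQ(x',0)$ positive on a nonempty open set. Lemma~\ref{avoidance}, in its open-set form, supplies a point there with $Q(x',0)\,x_1\cdots x_{n-1}\neq0$, and $x_n$ is taken as a root of the resulting quadratic, which is necessarily nonzero. For $c=0$, the equation is linear in $x_n$, and $\ell\not\equiv0$ by nondegeneracy.

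You instead use indefiniteness once, only to get a single nonzero isotropic $x_0$. The projection-from-a-point map $\phi(t)=Q(t)x_0-2B(x_0,t)t$ then sends all of $\mathbb{R}^n$ into the isotropic cone. Full support reduces to each $\phi_i=(x_0)_iQ(t)-2B(x_0,t)t_i$ being a nonzero polynomial. You get this from $Mx_0\neq\mathbf{0}$ when $(x_0)_i=0$, and from the rank bound (a form $t_i\ell(t)$ has rank at most $2<n$) when $(x_0)_i\neq0$.

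What each route buys:
\begin{itemize}
\item Your route has no case split and no discriminant analysis.
\item It needs Lemma~\ref{avoidance} only in its weakest form: a nonzero polynomial is not identically zero on $\mathbb{R}^n$.
\item It isolates the hypothesis $n\ge3$ as exactly a rank condition, with $Q=x_1x_2$ showing it is sharp.
\item The paper's route is more hands-on: it builds the last coordinate explicitly from the first $n-1$.
\end{itemize}

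In a write-up, make these clean-ups:
\begin{itemize}
\item Drop the implicit-function-theorem paragraph and the unused diagonalization.
\item Delete the false interim claim that nondegeneracy alone prevents $t_i$ from dividing $Q(t)$ for $n\ge2$. Your own example $x_1x_2$ refutes it, and the rank argument is the correct justification.
\item Remove the perturbation step for the segment. The segment from $u$ to $v$ meets $\mathbf{0}$ only if $v$ is a negative multiple of $u$, which would force $Q(v)>0$.
\end{itemize}
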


\begin{pf}
Write $$x=\begin{pmatrix}x'\\x_n\end{pmatrix}\in\mathbb{R}^n,\  \text{where}
\
x'=(x_1,\dots,x_{n-1})^\top\in\mathbb{R}^{n-1}.
$$
Let $B$ denote the symmetric bilinear form associated with $Q$, and define
\[
\phi(x')=Q\begin{pmatrix}x'\\0\end{pmatrix},\qquad
\ell(x')=B\left(e_n,\begin{pmatrix}x'\\0\end{pmatrix}\right),\qquad
c=Q(e_n).
\]
Then
\begin{equation}\label{eq:Q-decomposition}
Q\begin{pmatrix}x'\\x_n\end{pmatrix}
=
\phi(x')+2\ell(x')x_n+cx_n^2.
\end{equation}

We first show that $\phi\not\equiv 0$. Suppose, to the contrary, that
$\phi\equiv 0$. Then $Q$ vanishes on the hyperplane
\(
H=\operatorname{span}\{e_1,\dots,e_{n-1}\}.
\)
If $v,w\in H$, then
\[
Q(v+w)=Q(v)+Q(w)+2B(v,w).
\]
Since $Q$ vanishes on $H$, we get $B(v,w)=0$ for all $v,w\in H$. Hence
\[
H\subseteq H^\perp,
\]
where
\(
H^\perp=\{x\in\mathbb{R}^n:B(x,h)=0\text{ for all }h\in H\}
\)
denotes the orthogonal complement of $H$ with respect to $B$.
Since $Q$ is nondegenerate, the associated bilinear form $B$ is nondegenerate. Hence
\[
\dim H^\perp=n-\dim H=1.
\]
Thus
\[
n-1=\dim H\le \dim H^\perp=1,
\]
which is impossible because $n\ge 3$. Therefore, $\phi$ is a nonzero polynomial.

We now consider two cases.

\medskip

\noindent\textbf{Case 1: $c\neq 0$.}
In this case, equation \eqref{eq:Q-decomposition} can be rewritten as
\[
Q\begin{pmatrix}x'\\x_n\end{pmatrix}
=
c\left(x_n+\frac{\ell(x')}{c}\right)^2
-\frac{1}{c}\Delta(x'),
\]
where
\[
\Delta(x')=\ell(x')^2-c\phi(x').
\]

We next show that $\Delta(x')>0$ for some
$x'\in\mathbb R^{n-1}$. Since $Q$ is indefinite, it
attains a value of sign opposite to $c$. If $c>0$, choose
$u=(u',u_n)^\top\in\mathbb{R}^n$ such that $Q(u)<0$. Then
\[
Q(u)=
c\left(u_n+\frac{\ell(u')}{c}\right)^2
-\frac{1}{c}\Delta(u').
\]
Since the first term on the right-hand side is nonnegative, we must have
\(
-\frac{1}{c}\Delta(u')<0.
\)
As $c>0$, this gives $\Delta(u')>0$.
Similarly, if $c<0$, choose $u\in\mathbb{R}^n$ such that $Q(u)>0$. Then
\(
c\left(u_n+\frac{\ell(u')}{c}\right)^2\le 0,
\)
and hence the equality above forces $\Delta(u')>0$.

Therefore
\(
U=\{x'\in\mathbb{R}^{n-1}:\Delta(x')>0\}
\)
is a nonempty open subset of $\mathbb{R}^{n-1}$.
Consider the polynomial
\(
F(x')=\phi(x')x_1\cdots x_{n-1}.
\)
Since $\phi\not\equiv 0$, the polynomial $F$ is nonzero. By
Lemma~\ref{avoidance}, $F$ cannot vanish on the nonempty open set $U$.
Therefore, there exists
\(
x^\ast=(x_1^\ast,\dots,x_{n-1}^\ast)^\top\in U
\)
such that
\(
F(x^\ast)\neq 0.
\)
Thus
\[
\phi(x^\ast)\neq 0
\quad\text{and}\quad
x_i^\ast\neq 0
\quad\text{for all } i=1,\dots,n-1.
\]

Fix \(x'=x^\ast\). By \eqref{eq:Q-decomposition}, treating the last coordinate as a variable
\(t\in\mathbb{R}\), we obtain the quadratic equation
\[
ct^2+2\ell(x^\ast)t+\phi(x^\ast)=0.
\]
Its discriminant is
\[
4\ell(x^\ast)^2-4c\phi(x^\ast)=4\Delta(x^\ast)>0.
\]
Hence it has two real roots. Moreover, since $\phi(x^\ast)\neq 0$, neither
root is zero. Choosing $x_n$ to be either root gives
\[
Q\begin{pmatrix}x^\ast\\x_n\end{pmatrix}=0,
\]
and all coordinates of this vector are nonzero.

\medskip

\noindent\textbf{Case 2: $c=0$.}
In this case, equation \eqref{eq:Q-decomposition} reduces to
\[
Q\begin{pmatrix}x'\\x_n\end{pmatrix}
=
\phi(x')+2\ell(x')x_n.
\]

We first show that $\ell$ is not identically zero. Indeed,
\[
\ell(x')=
B\left(e_n,\begin{pmatrix}x'\\0\end{pmatrix}\right)
=
\sum_{j=1}^{n-1}B(e_n,e_j)x_j.
\]
If $\ell\equiv 0$, then
\[
B(e_n,e_j)=0
\quad\text{for all } 1\le j\le n-1.
\]
Since $c=0$, we also have
\[
B(e_n,e_n)=Q(e_n)=0.
\]
Therefore,
\[
B(e_n,e_j)=0
\quad\text{for all } 1\le j\le n.
\]
Thus, $e_n$ is orthogonal to all vectors in $\mathbb{R}^n$, which means that
$e_n$ lies in the radical of $B$. This contradicts the nondegeneracy of $Q$.
Hence $\ell\not\equiv 0$.

Consider the polynomial
\[
G(x')=\phi(x')\ell(x')x_1\cdots x_{n-1}.
\]
Observe that $G\not\equiv 0$. By Lemma~\ref{avoidance}, there exists
$x^\ast\in\mathbb{R}^{n-1}$ such that
\[
\phi(x^\ast)\neq 0,\qquad
\ell(x^\ast)\neq 0,\qquad
x_1^\ast\cdots x_{n-1}^\ast\neq 0.
\]
Set
\[
x_n=-\frac{\phi(x^\ast)}{2\ell(x^\ast)}.
\]
Then $x_n\neq 0$, and
\[
Q\begin{pmatrix}x^\ast\\x_n\end{pmatrix}
=
\phi(x^\ast)+2\ell(x^\ast)
\left(-\frac{\phi(x^\ast)}{2\ell(x^\ast)}\right)
=0.
\]
Therefore,
\(
\begin{pmatrix}x^\ast\\x_n\end{pmatrix}
\)
is an isotropic vector for $Q$ whose coordinates are all nonzero.
\end{pf}
We now apply Proposition~\ref{prop: nonzero isotropic vector} to the quadratic form induced by the inverse of a matrix realizing property~(P).

\begin{lem}\label{lem:isotropic_inverse}
If a matrix $A \in S(G)$ of order $n \ge 3$ realizes property~(P), then the quadratic form $Q_A(x) = x^\top A^{-1} x$ admits a fully supported isotropic vector.
\end{lem}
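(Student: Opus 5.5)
The plan is to reduce the statement to Proposition~\ref{prop: nonzero isotropic vector} applied to the quadratic form $Q_A(x)=x^\top A^{-1}x$ on $\mathbb{R}^n$ with $n\ge 3$. That proposition has two hypotheses, nondegeneracy and indefiniteness, and I will check each in turn.

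Nondegeneracy is immediate. The symmetric matrix representing $Q_A$ is $A^{-1}$, which exists and is nonsingular because $A$ realizes property~(P) and is therefore nonsingular. It is symmetric because $A$ is. Hence the radical of the associated bilinear form $B(x,y)=x^\top A^{-1}y$ is zero.

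Indefiniteness is the only step needing an idea. By Jacobi's identity and the fact that every vertex is a P-vertex, $\operatorname{diag}(A^{-1})=\mathbf{0}$. Then $Q_A(e_i)=(A^{-1})_{ii}=0$ for all $i$, so $Q_A$ is neither positive definite nor negative definite. To get both signs, I use that $A^{-1}$ is nonsingular with zero diagonal, so some off-diagonal entry $b=(A^{-1})_{ij}$, with $i\neq j$, is nonzero. Otherwise $A^{-1}$ would be the zero matrix. A direct computation gives
\[
Q_A(e_i+e_j)=2b, \qquad Q_A(e_i-e_j)=-2b,
\]
which have opposite signs. Hence $Q_A$ is indefinite.

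With both hypotheses verified and $n\ge 3$, Proposition~\ref{prop: nonzero isotropic vector} yields a fully supported vector $x$ with $x^\top A^{-1}x=0$, which proves the statement. I expect no real obstacle. The only point to handle carefully is indefiniteness, and the pair of vectors $e_i\pm e_j$ settles it.
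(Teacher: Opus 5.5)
Your proposal is correct and matches the paper's proof step for step. Nondegeneracy comes from the nonsingularity of $A^{-1}$. Indefiniteness comes from the zero diagonal (via Jacobi's identity) together with a nonzero off-diagonal entry $(A^{-1})_{ij}$, tested on $e_i\pm e_j$. Proposition~\ref{prop: nonzero isotropic vector} then gives the fully supported isotropic vector.
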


\begin{proof}
 Because $A$ realizes property~(P), it is nonsingular and satisfies $\det A(i) = 0$ for all $i$. Define a quadratic form
\[
Q_A:\mathbb{R}^n\longrightarrow \mathbb{R},
\qquad
Q_A(x)=x^\top A^{-1}x.
\]
Set $M=A^{-1}$,
since
\[
M_{ii}=\frac{\det A(i)}{\det A}=0
\quad\text{for all } i=1,\dots,n,
\]
the matrix $M$ has zero diagonal, and therefore
\[
Q_A(x)=\sum_{i\neq j}M_{ij}x_ix_j
=
2\sum_{1\le i<j\le n}M_{ij}x_ix_j.
\]
We first show that $Q_A$ is a nondegenerate indefinite quadratic form. Nondegeneracy
follows from the nonsingularity of $M$. Since $M$ is nonsingular, it cannot be the zero matrix. As its diagonal entries are all zero, there exist distinct
indices $i$ and $j$ such that $M_{ij}\neq 0$. Then
\[
Q_A(e_i+e_j)=2M_{ij},
\qquad
Q_A(e_i-e_j)=-2M_{ij}.
\]
Thus, $Q_A$ attains both positive and negative values. Therefore, $Q_A$ is indefinite.

Since $Q_A$ is a nondegenerate indefinite quadratic form on $\mathbb{R}^n$ with
$n\ge3$, Proposition~\ref{prop: nonzero isotropic vector} yields an isotropic vector 
\(
x^\ast=(x_1^\ast,\dots,x_n^\ast)^\top\in\mathbb{R}^n
\)
such that
\(
x_i^\ast\neq 0
\text{ for all } i=1,\dots,n.
\)
\end{proof}

We now use Lemma~\ref{lem:isotropic_inverse} to prove the main result of this section.
\begin{thm}\label{general}
Let $G$ be a graph of order $n\ge 3$. If $G$ has property~(P), then $G$ admits a zero-sum inverse realization of property~(P).
\end{thm}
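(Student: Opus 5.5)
Take a matrix $A\in S(G)$ realizing property~(P) and rescale it by an invertible diagonal matrix built from the isotropic vector of Lemma~\ref{lem:isotropic_inverse}. Since $G$ has property~(P), there is a nonsingular $A\in S(G)$ with $\operatorname{diag}(A^{-1})=\mathbf{0}$. Because $n\ge 3$, Lemma~\ref{lem:isotropic_inverse} gives a fully supported vector $x^\ast=(x_1^\ast,\dots,x_n^\ast)^\top$ with $(x^\ast)^\top A^{-1}x^\ast=0$.

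Define $D=\operatorname{diag}(x_1^\ast,\dots,x_n^\ast)$. Since every $x_i^\ast$ is nonzero, $D$ is nonsingular. Set
\[
\widetilde{A}=D^{-1}AD^{-1}.
\]
The verification has three parts.

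\textbf{First, $\widetilde{A}\in S(G)$.} The matrix $\widetilde{A}$ is symmetric, and its entries are $\widetilde{A}_{ij}=A_{ij}/(x_i^\ast x_j^\ast)$. Since $x_i^\ast x_j^\ast\neq 0$, the off-diagonal zero pattern is exactly that of $A$, so $G(\widetilde{A})=G$.

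\textbf{Second, $\widetilde{A}$ realizes property~(P).} It is nonsingular, being a product of nonsingular matrices. Its inverse is $\widetilde{A}^{-1}=DA^{-1}D$, with entries $(\widetilde{A}^{-1})_{ij}=x_i^\ast x_j^\ast (A^{-1})_{ij}$. In particular $(\widetilde{A}^{-1})_{ii}=(x_i^\ast)^2(A^{-1})_{ii}=0$ for every $i$. By Jacobi's identity this gives $\det\widetilde{A}(i)=0$ for all $i$, so $P_\nu(\widetilde{A})=n$.

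\textbf{Third, the zero-sum condition holds.} Using $D\mathbf{1}=x^\ast$,
\[
\mathbf{1}^\top\widetilde{A}^{-1}\mathbf{1}=\mathbf{1}^\top DA^{-1}D\mathbf{1}=(x^\ast)^\top A^{-1}x^\ast=0.
\]
Hence $\widetilde{A}$ is a zero-sum inverse realization of property~(P).

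There is essentially no remaining obstacle. The substantive work, namely producing an isotropic vector with no zero coordinates, is already done in Proposition~\ref{prop: nonzero isotropic vector} and Lemma~\ref{lem:isotropic_inverse}. Full support is precisely what makes $D$ invertible and what keeps the graph of $\widetilde{A}$ equal to $G$. The only care needed is to scale by $D^{-1}$ on both sides of $A$, so that the inverse becomes $DA^{-1}D$ and the quadratic form $\mathbf{1}^\top\widetilde{A}^{-1}\mathbf{1}$ turns into $(x^\ast)^\top A^{-1}x^\ast$.
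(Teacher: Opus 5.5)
Your proof is correct and is essentially the paper's argument: your $D^{-1}AD^{-1}$ with $D=\operatorname{diag}(x_1^\ast,\dots,x_n^\ast)$ is exactly the paper's $B=DAD$ with $D=\operatorname{diag}(1/x_1^\ast,\dots,1/x_n^\ast)$, and the zero-sum computation is the same. The only cosmetic difference is that you check the P-vertex condition via the zero diagonal of $DA^{-1}D$ and Jacobi's identity, whereas the paper computes $\det B(i)=\det(D(i))^2\det A(i)=0$ directly.
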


\begin{pf}
Since $G$ has property~(P), there exists a nonsingular matrix $A \in S(G)$ that satisfies $\det A(i)=0$ for all $i=1,\dots,n$. By Lemma~\ref{lem:isotropic_inverse}, the quadratic form $Q_A(x) = x^\top A^{-1} x$ admits a fully supported isotropic vector $x^\ast = (x_1^\ast, \dots, x_n^\ast)^\top \in \mathbb{R}^n$.

Define the scaling matrix
\[
D=\operatorname{diag}\left(
\frac{1}{x_1^\ast},\frac{1}{x_2^\ast},\dots,\frac{1}{x_n^\ast}
\right),
\qquad
B=DAD.
\]
Since $D$ is diagonal, $B$ is symmetric. Moreover, for $i\neq j$,
\(
B_{ij}
=
\frac{A_{ij}}{x_i^\ast x_j^\ast}.
\)
Because each coordinate of $x^\ast$ is nonzero, we have
\[
B_{ij}\neq 0
\iff
A_{ij}\neq 0
\iff
ij\in E(G).
\]
Therefore $B\in S(G)$.

Furthermore,
\[
\det B
=
\det(DAD)
=
\det(D)^2\det A
=
\left(\prod_{k=1}^n\frac{1}{x_k^\ast}\right)^2\det A.
\]
Since each $x_k^\ast\neq 0$ and $\det A\neq 0$, we get $\det B\neq 0$.

Next, for each $i$, we have
\[
B(i)=D(i)A(i)D(i),
\]
where $D(i)$ is the principal submatrix of $D$ obtained by deleting the $i$th
row and the $i$th column. Hence
\[
\det B(i)
=
\det(D(i))^2\det A(i)
=
\left(\prod_{k\neq i}\frac{1}{x_k^\ast}\right)^2\det A(i)
=
0.
\]
Therefore $B$ is nonsingular and $\det B(i)=0$ for all $i$. Hence
\(
P_\nu(B)=n.
\)

It remains to check the zero-sum condition. Since $B=DAD$ and $D$ is
nonsingular,
\[
B^{-1}=(DAD)^{-1}=D^{-1}A^{-1}D^{-1}.
\]
Moreover,
\[
D^{-1}\mathbf 1
=
(x_1^\ast,\dots,x_n^\ast)^\top
=
x^\ast.
\]
Therefore
\[
\begin{aligned}
\mathbf 1^\top B^{-1}\mathbf 1
&=
\mathbf 1^\top D^{-1}A^{-1}D^{-1}\mathbf 1 \\
&=
(D^{-1}\mathbf 1)^\top A^{-1}(D^{-1}\mathbf 1) \\
&=
(x^*)^\top A^{-1} x^*\\
&=
Q_A(x^\ast) \\
&=
0,
\end{aligned}
\]
because $x^\ast$ is isotropic for $Q_A$.
Thus $B\in S(G)$ is a nonsingular matrix with
\[
P_\nu(B)=n
\quad\text{and}\quad
\mathbf{1}^\top B^{-1}\mathbf{1}=0.
\]
Hence $B$ is a zero-sum inverse realization of property~(P) for $G$.
\end{pf}
An immediate consequence of Theorem~\ref{general} is the following.
\begin{cor}
Let $G$ be a graph of order at least $3$ having property~(P).
Then there exists a nonsingular matrix $B\in S(G)$ such that
\(
P_\nu(B)=|V(G)|
\)
and $\mathbf1$ is an isotropic vector of the quadratic form
\(
Q_B(x)=x^\top B^{-1}x.
\)
\end{cor}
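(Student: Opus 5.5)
This corollary is a direct restatement of Theorem~\ref{general}, so the plan is to unpack that theorem and reinterpret its zero-sum condition in the language of quadratic forms. Let $G$ have order $n\ge 3$ and property~(P). By Theorem~\ref{general}, $G$ has a zero-sum inverse realization. That is, there is a nonsingular $B\in S(G)$ with $P_\nu(B)=n=|V(G)|$ and $\mathbf 1^\top B^{-1}\mathbf 1=0$.

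The only remaining step is to note that the quadratic form $Q_B(x)=x^\top B^{-1}x$ is well defined because $B$ is nonsingular. Evaluating it at $\mathbf 1$ gives
\[
Q_B(\mathbf 1)=\mathbf 1^\top B^{-1}\mathbf 1=\sum_{i,j=1}^n (B^{-1})_{ij}=0 .
\]
So $\mathbf 1$ is isotropic for $Q_B$, which is exactly the claim.

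There is no real obstacle here, since all the work is done in Theorem~\ref{general}. It is worth remarking on why the corollary holds. In that proof, the scaling $B=DAD$ with $D=\operatorname{diag}(1/x_1^\ast,\dots,1/x_n^\ast)$ transports the fully supported isotropic vector $x^\ast$ of $Q_A$ from Lemma~\ref{lem:isotropic_inverse} to the vector $D x^\ast=\mathbf 1$, which is isotropic for $Q_B$. The scaling also preserves the support graph, nonsingularity, and the vanishing of every $\det B(i)$.
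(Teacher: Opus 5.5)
Your proposal is correct and matches the paper, which presents this corollary as an immediate consequence of Theorem~\ref{general}: the zero-sum condition $\mathbf 1^\top B^{-1}\mathbf 1=0$ is literally $Q_B(\mathbf 1)=0$. Your remark that the scaling sends $x^\ast$ to $Dx^\ast=\mathbf 1$ correctly reflects how that theorem's proof works.
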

The following example gives an explicit construction for the even path \(P_{2m}\), where \(m\ge2\).

\begin{example}
Let \(m\ge2\).
Consider the natural bipartition of $P_{2m}$ given by the partite sets $U=\{1,3,\dots,2m-1\}$ and $V=\{2,4,\dots,2m\}$. With respect to this block vertex ordering, we choose the matrix $A \in S(P_{2m})$ defined by
\[
A=
\begin{pmatrix}
\mathbf0_m&X\\[2mm]
X^\top&\mathbf0_m
\end{pmatrix},
\]
where \(\mathbf0_m\) denotes the \(m\times m\) zero matrix and $X$ is the $m \times m$ lower bidiagonal matrix,
\[
X=
\begin{pmatrix}
1&0&0&\cdots&0\\[2mm]
\dfrac{m}{m-1}&1&0&\cdots&0\\[2mm]
0&-m&1&\ddots&\vdots\\
\vdots&\ddots&\ddots&\ddots&0\\
0&\cdots&0&-m&1
\end{pmatrix}.
\]
Because $X$ is lower triangular with a unit diagonal, it is nonsingular ($\det X = 1$). Consequently, $A$ is nonsingular with inverse
\[
A^{-1}=
\begin{pmatrix}
0&{(X^{\top})}^{-1}\\
X^{-1}&0
\end{pmatrix}.
\]
Since the diagonal of $A^{-1}$ is identically zero, $A$ realizes property~(P). To verify the zero-sum condition, it suffices to show that $\mathbf1_m^\top X^{-1}\mathbf1_m=0$. 

Let
\[
y=
\begin{pmatrix}
1\\[1mm]
-\dfrac{1}{m-1}\\
\vdots\\
-\dfrac{1}{m-1}
\end{pmatrix} \in \mathbb{R}^m.
\]
A direct computation yields $Xy=\mathbf1_m.$ Hence, $X^{-1}\mathbf1_m=y$. Since
\[
\mathbf1_m^\top y
=
1-(m-1)\frac1{m-1}
=0,
\]
we obtain $\mathbf1_m^\top X^{-1}\mathbf1_m=0$, as desired.

For $m=2$, this construction yields the explicit realization for $P_4$ in the natural vertex ordering:
\[
A_{P_4} = 
\begin{pmatrix}
0&1&0&0\\
1&0&2&0\\
0&2&0&1\\
0&0&1&0
\end{pmatrix}
\quad \text{and} \quad
A_{P_4}^{-1} = 
\begin{pmatrix}
0&1&0&-2\\
1&0&0&0\\
0&0&0&1\\
-2&0&1&0
\end{pmatrix}.
\]
As required, $A_{P_4}^{-1}$ has a zero diagonal and the sum of its entries is zero.
\end{example}

 \section{Property (P) for join of graphs}\label{sec:join}
 
In this section, we apply the zero-sum inverse realization established in the previous section to prove that property~(P) is preserved under the join operation.

\begin{defn}[Join of graphs {\cite[Definition~3.3.6]{west}}]
    Let $G_1=(V_1,E_1)$ and $G_2=(V_2,E_2)$ be disjoint graphs. The join of $G_1$ and $G_2$, denoted by $G_1 \vee G_2$, is the graph with vertex set
    \[ 
    V(G_1 \vee G_2)= V_1 \cup V_2, \]
   and edge set
    \[
    E(G_1 \vee G_2)= E(G_1) \cup E(G_2) \cup \bigl\{uv: u \in V(G_1), v \in V(G_2)\bigr\}. 
    \]
\end{defn}

\begin{thm}\label{join}
 Let $G_1$ and $G_2$ be graphs of order at least $3$. If both $G_1$ and $G_2$ have property (P), then their join $G_1 \vee G_2$ has property (P).
\end{thm}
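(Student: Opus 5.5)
The plan is to build an explicit realizing matrix for $G_1\vee G_2$ by placing two zero-sum inverse realizations on the diagonal and joining them with a constant off-diagonal block, and then to compute the inverse via a rank-two (Woodbury-type) update.

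\textbf{Setup.} Let $n_1=|V(G_1)|\ge 3$ and $n_2=|V(G_2)|\ge 3$. By Theorem~\ref{general}, there are nonsingular $A_1\in S(G_1)$ and $A_2\in S(G_2)$ with $\operatorname{diag}(A_k^{-1})=\mathbf 0$ and $\mathbf 1^\top A_k^{-1}\mathbf 1=0$. Write $B_k=A_k^{-1}$, $\mathbf 1_k$ for the all-ones vector of length $n_k$, and $J=\mathbf 1_1\mathbf 1_2^\top$. For a fixed $t\neq 0$ (for instance $t=1$) define
\[
M=\begin{pmatrix} A_1 & tJ\\ tJ^\top & A_2\end{pmatrix}.
\]
The diagonal blocks have support graphs $G_1$ and $G_2$, and every entry of the off-diagonal blocks equals $t\neq 0$. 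Hence $M\in S(G_1\vee G_2)$.

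\textbf{Rank-two decomposition.} Set $A=\operatorname{diag}(A_1,A_2)$, $e=(\mathbf 1_1^\top,\mathbf 0^\top)^\top$, $f=(\mathbf 0^\top,\mathbf 1_2^\top)^\top$, $U=[\,e\ f\,]$ and $C=t\begin{pmatrix}0&1\\1&0\end{pmatrix}$. Then
\[
M=A+UCU^\top.
\]
The key computation is
\[
U^\top A^{-1}U=\operatorname{diag}\bigl(\mathbf 1_1^\top B_1\mathbf 1_1,\ \mathbf 1_2^\top B_2\mathbf 1_2\bigr)=0 .
\]
This is exactly where the zero-sum condition enters.

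\textbf{Nonsingularity.} By the matrix determinant lemma,
\[
\det M=\det A\,\det(I_2+CU^\top A^{-1}U)=\det A\neq 0 .
\]

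\textbf{The inverse.} The Woodbury identity gives
\[
M^{-1}=A^{-1}-A^{-1}U\bigl(C^{-1}+U^\top A^{-1}U\bigr)^{-1}U^\top A^{-1}=A^{-1}-A^{-1}UCU^\top A^{-1}.
\]
Expanding the correction term $A^{-1}UCU^\top A^{-1}$, it equals $t\,(A^{-1}e)(A^{-1}f)^\top+t\,(A^{-1}f)(A^{-1}e)^\top$. Here $A^{-1}e=(B_1\mathbf 1_1,0)$ and $A^{-1}f=(0,B_2\mathbf 1_2)$, so the correction is supported only in the off-diagonal blocks. Therefore
\[
M^{-1}=\begin{pmatrix} B_1 & -t\,B_1\mathbf 1_1\mathbf 1_2^\top B_2\\ -t\,B_2\mathbf 1_2\mathbf 1_1^\top B_1 & B_2\end{pmatrix}.
\]
One can also verify $MM^{-1}=I$ directly, using $\mathbf 1_k^\top B_k\mathbf 1_k=0$.

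\textbf{Conclusion.} The diagonal of $M^{-1}$ consists of the diagonals of $B_1$ and $B_2$, which are zero. So $\det M(i)=0$ for every vertex $i$, and $M$ realizes property~(P) for $G_1\vee G_2$.

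The only delicate step is seeing that the Schur-complement/Woodbury correction does not touch the diagonal blocks. This hinges entirely on the vanishing of $U^\top A^{-1}U$, that is, on the zero-sum condition. That is why Theorem~\ref{general}, and with it the order-at-least-$3$ hypothesis (compare Proposition~\ref{K_2}), is needed.
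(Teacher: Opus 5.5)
Your proof is correct, but it argues differently from the paper's proof of this theorem (though not from the paper as a whole).

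\textbf{The paper's route.} The paper takes your $M$ with $t=1$ and works purely with determinants. It first gets $\det M=\det A_1\det\bigl(A_2-J_{n_2,n_1}A_1^{-1}J_{n_1,n_2}\bigr)=\det A_1\det A_2$ from the Schur complement. Then, for each deleted vertex $t\in V(G_1)$, it repeats the block determinant computation on $M(t)$. Since $A_1(t)$ is singular, this means pivoting on $A_2$, which gives $\det M(t)=\det A_2\det A_1(t)=0$. Vertices of $G_2$ are handled symmetrically. The paper never writes down the inverse.

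\textbf{Your route.} You write the join matrix as a rank-two update $A+UCU^\top$ of the block-diagonal matrix. The zero-sum condition enters once, in the identity $U^\top A^{-1}U=0$. That single identity gives nonsingularity and the closed form of $M^{-1}$ together, and the diagonal blocks of $M^{-1}$ are exactly $B_1$ and $B_2$. This is the same mechanism the paper uses later for the $H$-join (Theorem~\ref{thm:H-join}, where $A^{-1}=D^{-1}-D^{-1}UA_HU^\top D^{-1}$), specialized to $H=K_2$.

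\textbf{Comparison.} The paper's route is slightly more elementary: it needs only the block determinant formula. The price is redoing that formula for every $M(t)$ and choosing the right block to pivot on. Your route handles all vertices at once, exhibits the off-diagonal blocks of the inverse explicitly, and extends verbatim to arbitrary $H$-joins. Your free scalar $t\neq 0$ is a harmless generalization of the paper's choice $t=1$.
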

\begin{pf}
Let $n_1=|V(G_1)|$ and $n_2=|V(G_2)|$. Since $G_1$ and $G_2$ have property~(P) and are of order at least $3$, Theorem~\ref{general} implies that each admits a zero-sum inverse realization of property~(P).
Thus, there exist nonsingular matrices
\[
A_1\in S(G_1) \quad \text{and} \quad A_2\in S(G_2)
\]
such that
\[
\det A_1(i)=0 \quad \text{for all } i=1,\dots,n_1,
\]
\[
\det A_2(j)=0 \quad \text{for all } j=1,\dots,n_2,
\]
and their inverses satisfy the zero-sum condition,
\[
\mathbf{1}_{n_1}^\top A_1^{-1}\mathbf{1}_{n_1}=0, \qquad \mathbf{1}_{n_2}^\top A_2^{-1}\mathbf{1}_{n_2}=0.
\]

Define
\[
A = 
\begin{pmatrix}
A_1 & J_{n_1,n_2} \\[2mm]
J_{n_2,n_1} & A_2
\end{pmatrix},
\]
where $J_{n_1,n_2}$ is the $n_1 \times n_2$ all-ones matrix. By construction, $A \in S(G_1 \vee G_2)$.

Since $A_1$ is nonsingular, the Schur complement formula yields
\[
\det(A) = \det(A_1)\,\det\Big(A_2 - J_{n_2,n_1} A_1^{-1} J_{n_1,n_2}\Big).
\]

Note that $J_{n_2,n_1} = \mathbf{1}_{n_2}\mathbf{1}_{n_1}^{\top}$. Using  
$\mathbf{1}_{n_1}^{\top} A_1^{-1} \mathbf{1}_{n_1} = 0$, we obtain
\[
J_{n_2,n_1} A_1^{-1} J_{n_1,n_2}
= \mathbf{1}_{n_2} \Big(\mathbf{1}_{n_1}^{\top} A_1^{-1} \mathbf{1}_{n_1}\Big) \mathbf{1}_{n_2}^{\top}
= \mathbf{0}_{n_2},
\]
 where $\mathbf{0}_{n_2}$ is the zero matrix of order $n_2$.
Hence \(\det(A)=\det(A_1)\det(A_2)\neq 0\), so \(A\) is nonsingular.

Let $t \in \{1,2,\dots,n_1\}$. Then
\[
A(t)= 
\begin{pmatrix}
A_1(t) & J_{n_1-1,n_2}\\[2mm]
J_{n_2,n_1-1} & A_2
\end{pmatrix}.
\]
Applying the same argument to the block matrix $A(t)$,
\[
\det(A(t))=\det(A_2)\det(A_1(t)).
\]
Since $\det(A_1(t))=0$ for all $t \in \{1,2,\dots,n_1\}$, it follows that
\[
\det(A(t))=0 \quad \text{for all } t \in \{1,2,\dots,n_1\}.
\]
Similarly, for $t=n_1+j$ with $j\in\{1,\dots,n_2\}$,
\[
\det(A(t))=\det(A_1)\det(A_2(j))=0.
\]
Therefore, $A$ is a nonsingular matrix in $S(G_1\vee G_2)$ satisfying
\(
\det(A(t))=0
\ \text{for all }t.
\)
Hence $G_1\vee G_2$ has property~(P).
\end{pf}

\begin{rem}
The assumption that each graph has order at least $3$ in Theorem~\ref{join} is sufficient for the proof, but not necessary for the conclusion. Indeed, the assumption is used only to guarantee the existence of a zero-sum inverse realization of property~(P). Although $K_2$ has property~(P), it does not admit such a realization (Proposition~\ref{K_2}). Nevertheless,
\( K_2\vee K_2=K_4
\)
still has property~(P) (as shown in Theorem~\ref{thm:Kn_PropertyP}).
\end{rem}

\section{Property (P) for $H$-join of graphs}\label{sec:H-join}

Theorem~\ref{join} naturally extends to the more general $H$-join of a family of graphs. In this section, we establish this extension using the zero-sum inverse realization developed earlier.
\begin{defn}[$H$-join of graphs {\cite[p.~737]{cardoso}}]
Let $H$ be a graph with vertex set $\{1,2,\dots,k\}$, and let $\mathcal{F} =\{G_1, G_2, \dots, G_k\}$ be a family of $k$ pairwise disjoint graphs. The $H$-join of $\mathcal{F}$, denoted by 
\(
H[G_1,\dots,G_k]
\)
is the graph obtained from the disjoint union of $G_1,G_2,\dots,G_k$ by joining every vertex of $G_i$ to every vertex of $G_j$ whenever $ij\in E(H)$.
\end{defn}

\begin{thm}\label{thm:H-join}
Let $H$ be a graph with vertex set $\{1,2,\dots,k\}$, and let $\mathcal{F}=\{G_1, G_2, \dots, G_k\}$ be a family of pairwise disjoint graphs, each of order at least $3$. If every graph $G_i$ has property (P), then the $H$-join $H[G_1,\dots,G_k]$ has property (P).
\end{thm}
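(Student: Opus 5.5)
By Theorem~\ref{general}, each $G_i$ has a zero-sum inverse realization. So there are nonsingular matrices $A_i\in S(G_i)$ with $\operatorname{diag}(A_i^{-1})=\mathbf 0$ and $\mathbf 1_{n_i}^\top A_i^{-1}\mathbf 1_{n_i}=0$, where $n_i=|V(G_i)|$. We then build the block matrix
\[
A=\bigl(A_{ij}\bigr)_{i,j=1}^{k},\qquad A_{ii}=A_i,\qquad A_{ij}=a_{ij}\,\mathbf 1_{n_i}\mathbf 1_{n_j}^\top \ (i\neq j),
\]
with $a_{ij}=a_{ji}=1$ if $ij\in E(H)$ and $a_{ij}=0$ otherwise. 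By construction $A\in S(H[G_1,\dots,G_k])$. Rather than iterating Schur complements as in the proof of Theorem~\ref{join}, we will compute $A^{-1}$ directly by guessing the block-diagonal candidate and correcting it.

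Write $A=D+UCU^\top$, where $D=\operatorname{diag}(A_1,\dots,A_k)$, $U=\operatorname{diag}(\mathbf 1_{n_1},\dots,\mathbf 1_{n_k})$ is $n\times k$, and $C=(a_{ij})$ has zero diagonal. Put $x_i=A_i^{-1}\mathbf 1_{n_i}$ and $X=\operatorname{diag}(x_1,\dots,x_k)=D^{-1}U$. The zero-sum condition gives $U^\top D^{-1}U=\operatorname{diag}(\mathbf 1^\top x_i)=0$. This is the key identity. We first want to show that
\[
A^{-1}=D^{-1}-XCX^\top .
\]
To check this, multiply out $(D+UCU^\top)(D^{-1}-XCX^\top)$. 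Using $DX=U$ and $U^\top X=U^\top D^{-1}U=0$, this product equals
\[
I+UCU^\top D^{-1}-UCX^\top-UC(U^\top X)CX^\top .
\]
Since $U^\top D^{-1}=X^\top$ by symmetry, the middle two terms cancel, and the last term vanishes because $U^\top X=0$. So the product is $I$. This shows $A$ is nonsingular, and it works even when $C$ is singular or zero.

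Now read off the diagonal of $A^{-1}$. The $(i,i)$ diagonal block of $XCX^\top$ is $c_{ii}\,x_ix_i^\top=0$, since $C$ has zero diagonal. Hence the $(i,i)$ diagonal block of $A^{-1}$ is $A_i^{-1}$, whose diagonal is zero. Therefore $\operatorname{diag}(A^{-1})=\mathbf 0$, and by the equivalent form of Definition~\ref{def:P}, $H[G_1,\dots,G_k]$ has property~(P).

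The main obstacle is the verification of the inverse formula, since everything depends on the cancellation $U^\top D^{-1}U=0$ coming from the zero-sum condition. If the direct check were awkward, the fallback would be induction on $k$ via Schur complements, but that route needs the zero-sum property to persist for the intermediate blocks, which the explicit formula avoids. The formula also shows that $\mathbf 1^\top A^{-1}\mathbf 1=-\mathbf 1^\top XCX^\top\mathbf 1=0$, because $X^\top\mathbf 1=(\mathbf 1^\top x_i)_i=0$. So the resulting realization is again a zero-sum inverse realization, which makes the construction iterable.
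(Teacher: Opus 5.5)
Your proof is correct and is essentially the paper's argument. You use the same decomposition $A=D+UA_HU^\top$, the same key identity $U^\top D^{-1}U=\mathbf{0}_k$, and the same inverse formula $A^{-1}=D^{-1}-D^{-1}UA_HU^\top D^{-1}$, whose diagonal blocks are the $A_i^{-1}$. The only differences are minor: you check the inverse by direct multiplication, which also gives nonsingularity, where the paper uses a determinant identity and the nilpotency of $D^{-1}UA_HU^\top$; and your remark that $\mathbf 1^\top A^{-1}\mathbf 1=0$ (which implicitly also uses $\mathbf 1^\top D^{-1}\mathbf 1=0$) is a small addition the paper does not state.
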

\begin{pf}
Let $G = H[G_1,\dots,G_k]$, and let $n_i = |V(G_i)|$ for each $i = 1, \dots, k$. Since each $G_i$ has property~(P) and is of order at least $3$,
Theorem~\ref{general} implies that each graph in the family admits a
zero-sum inverse realization of property~(P).
Hence, for each $i=1,\dots,k$, there exists a nonsingular matrix
\(
A_i \in S(G_i)
\)
such that
\(
\det A_i(t)=0 \text{ for all } t=1,\dots,n_i,
\)
and its inverse satisfies the zero-sum condition
\(
\mathbf{1}_{n_i}^\top A_i^{-1}\mathbf{1}_{n_i}=0.
\)

Define the block matrix
\[
A =
\begin{pmatrix}
A_1 & X_{12} & \cdots & X_{1k} \\
X_{21} & A_2 & \cdots & X_{2k} \\
\vdots & \vdots & \ddots & \vdots \\
X_{k1} & X_{k2} & \cdots & A_k
\end{pmatrix},
\]
where each off-diagonal block
\[
X_{ij} =
\begin{cases}
J_{n_i,n_j}, & \text{if } ij \in E(H),\\
\mathbf{0}_{n_i,n_j}, & \text{otherwise},
\end{cases}
\]
and $J_{n_i,n_j}$ and $\mathbf{0}_{n_i,n_j}$ denote the all-ones and zero matrices of order $n_i\times n_j$, respectively. By construction, $A$ is symmetric and belongs to $S(G)$.

Let
\(
N=\sum_{i=1}^k n_i=|V(G)|
\) 
and define
\[
D=\operatorname{diag}(A_1,\dots,A_k),
\]
together with the matrix $U$ of order \(N\times k\):
\[
U=
\begin{pmatrix}
\mathbf{1}_{n_1} & \mathbf{0} & \cdots & \mathbf{0} \\
\mathbf{0} & \mathbf{1}_{n_2} & \cdots & \mathbf{0} \\
\vdots & \vdots & \ddots & \vdots \\
\mathbf{0} & \mathbf{0} & \cdots & \mathbf{1}_{n_k}
\end{pmatrix},
\]
where each occurrence of $\mathbf{0}$ denotes a zero column vector of the appropriate dimension.

Let $A_H$ denote the adjacency matrix of $H$. Partition the matrix \(UA_HU^\top\) into \(k\times k\) blocks according to the block structure of \(A\). Then its \((i,j)\)-th block is
\[
(A_H)_{ij}\,\mathbf{1}_{n_i}\mathbf{1}_{n_j}^\top
=
\begin{cases}
J_{n_i,n_j}, & \text{if } ij\in E(H),\\
\mathbf{0}_{n_i,n_j}, & \text{otherwise}.
\end{cases}
\]
Since \(H\) is a simple graph, \((A_H)_{ii}=0\) for every
\(i=1,\dots,k\). Hence, every diagonal block of
\(UA_HU^\top\) is the zero matrix and, therefore
\[
A=D+UA_HU^\top.
\]

Since each $A_i$ is nonsingular, the block diagonal matrix $D$ is nonsingular. Moreover,
\[
D^{-1}=
\begin{pmatrix}
A_1^{-1} & \mathbf{0} & \cdots & \mathbf{0} \\
\mathbf{0} & A_2^{-1} & \cdots & \mathbf{0} \\
\vdots & \vdots & \ddots & \vdots \\
\mathbf{0} & \mathbf{0} & \cdots & A_k^{-1}
\end{pmatrix},
\]
where each occurrence of $\mathbf{0}$ denotes a zero matrix of the appropriate size.

Hence,
\begin{equation}\label{eq:detA formula}
A=D(I_N+D^{-1}UA_HU^\top).
\end{equation}
Therefore,
\begin{equation}\label{eq:detA=detD}
\begin{aligned}
\det(A)
&=\det(D)\det(I_N+D^{-1}UA_HU^\top)\\
&=\det(D)\det(I_k+A_HU^\top D^{-1}U),
\end{aligned}
\end{equation}
where the second equality follows from the well-known determinant identity
\cite[Corollary~18.1.2]{harville} (see also \cite[Theorem~1.3.22]{inter}),
applied to the matrices
\(
D^{-1}U\in\mathbb{R}^{N\times k}
\ \text{and}\ 
A_HU^\top\in\mathbb{R}^{k\times N}.
\)
Furthermore, \[(U^\top D^{-1} U)_{ij}
=
\begin{cases}
\mathbf{1}_{n_i}^\top A_i^{-1} \mathbf{1}_{n_i}, & \text{if } i=j,\\
0, & \text{otherwise.}
\end{cases}\]
Since $\mathbf{1}_{n_i}^\top A_i^{-1} \mathbf{1}_{n_i}=0$ for all $i$, it follows that
\(
U^\top D^{-1} U = \mathbf{0}_k.
\)
Therefore, from \eqref{eq:detA=detD}, 
\(
\det(A) = \det(D)\ne 0.
\)

Next, we show that $\det(A(t))=0$ for all $t \in V(G)$. 
Since $A$ is nonsingular, it is enough to show that every diagonal entry of
$A^{-1}$ is zero.

Since $U^\top D^{-1}U = \mathbf{0}_k$, we obtain
\[
(D^{-1} U A_H U^\top)^2 =
D^{-1}UA_H(U^\top D^{-1}U)A_HU^\top = \mathbf{0}_N.
\]
From \eqref{eq:detA formula}, it follows that
\[
A^{-1} = D^{-1} - D^{-1} U A_H U^\top D^{-1}.
\]
 The matrix $D^{-1}$ is block diagonal, with its $(p,p)$-th block equal to $A_p^{-1}$ and all off-diagonal blocks equal to the zero matrix.
 Moreover, under the same conformal block partition, the $(p,q)$-th block of
$D^{-1}UA_HU^\top D^{-1}$ is
\[
(A_H)_{pq}\,A_p^{-1}\mathbf{1}_{n_p}\mathbf{1}_{n_q}^\top A_q^{-1}.
\]

Since \(A_H\) is the adjacency matrix of a simple graph,
\(
(A_H)_{pp}=0
\ \text{for all } p=1,\dots,k.
\)
Therefore, the $(p,p)$-block of $A^{-1}$ is precisely
$A_p^{-1}$.

As each \(A_p^{-1}\) has zero diagonal entries, every diagonal entry of
\(A^{-1}\) is zero. Therefore,
\(
\det(A(t))=0
\ \text{for all } t\in V(G).
\)
Thus, $A$ is a nonsingular matrix in $S(G)$ satisfying
\(
P_\nu(A)=|V(G)|,
\)
and therefore
\(
H[G_1,\dots,G_k]
\)
has property~(P).
\end{pf}
Before applying Theorem~\ref{thm:H-join}, we establish that every complete graph possesses property~(P). Since we were unable to locate this result in the existing literature, we include a short proof for completeness.

\begin{thm}\label{thm:Kn_PropertyP}
The complete graph $K_n$ has property (P) for all $n > 1$.
\end{thm}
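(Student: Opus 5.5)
Since $A$ is nonsingular, $G$ has property~(P) exactly when there is a nonsingular $A\in S(G)$ with $\operatorname{diag}(A^{-1})=\mathbf 0$, and then $A$ is recovered as $M^{-1}$. So I will work with $M=A^{-1}$ directly: I look for a nonsingular symmetric matrix $M$ with zero diagonal whose inverse $M^{-1}$ has \emph{all} off-diagonal entries nonzero. Then $M^{-1}\in S(K_n)$, it is nonsingular, and $\operatorname{diag}\big((M^{-1})^{-1}\big)=\operatorname{diag}(M)=\mathbf 0$, which is what is required.

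The natural candidate is $M=J_n-I_n$, the adjacency matrix of $K_n$. Its eigenvalues are $n-1$ (once) and $-1$ (with multiplicity $n-1$), so it is nonsingular for $n>1$. Since $J_n^2=nJ_n$, I will check the identity $(J_n-I_n)\big(\tfrac{1}{n-1}J_n-I_n\big)=I_n$, which gives
\[
(J_n-I_n)^{-1}=\tfrac{1}{n-1}J_n-I_n .
\]
The off-diagonal entries of this inverse all equal $\tfrac1{n-1}\neq 0$. Hence $A=\tfrac1{n-1}J_n-I_n$ lies in $S(K_n)$, it is nonsingular, and $A^{-1}=J_n-I_n$ has zero diagonal.

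The conclusion then follows from Jacobi's identity: $\det A(i)=(A^{-1})_{ii}\det A=0$ for every $i$, so $P_\nu(A)=n$. There is essentially no obstacle. The only step needing care is verifying the inverse formula, and in particular confirming that the diagonal entries of $A$, namely $\tfrac1{n-1}-1$, play no role, since only the off-diagonal entries determine membership in $S(K_n)$. The case $n=2$ gives $A=\begin{pmatrix}0&1\\1&0\end{pmatrix}$, which agrees with Proposition~\ref{K_2}'s description of $K_2$.
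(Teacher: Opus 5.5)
Your proof is correct, and your witness $A=\tfrac{1}{n-1}J_n-I_n$ is just $\tfrac{1}{n-1}$ times the paper's matrix $(1-n)I_n+J_n$, so the construction is the same. The only difference is how it is verified: the paper computes $\det A$ and $\det A(i)$ directly from $\det(xI_k+J_k)=(x+k)x^{k-1}$, whereas you invert $J_n-I_n$ and apply the zero-diagonal criterion via Jacobi's identity, which also exhibits $A^{-1}$ explicitly as the adjacency matrix of $K_n$.
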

\begin{pf}
Consider the $n \times n$ matrix $$A = (1-n)I_n + J_n,$$ where $I_n$ is the identity matrix and $J_n$ is the all-ones matrix. Since all off-diagonal entries of $A$ are nonzero, $A \in S(K_n)$.

Recall that, for any positive integer $k$,
\[
\det(xI_k+J_k)=(x+k)x^{k-1}.
\]
Applying this formula with \(x=1-n\) and \(k=n\), we obtain
\[
\det(A)
=(1-n+n)(1-n)^{n-1}
=(1-n)^{\,n-1}.
\]
Since \(n>1\), it follows that \(\det(A)\neq0\). Hence \(A\) is nonsingular.

For each vertex $i$, the principal submatrix $A(i)$ is
\[
A(i)=(1-n)I_{n-1}+J_{n-1}.
\]
Again using the determinant formula with $x=1-n$ and $k=n-1$, we obtain
\[
\det(A(i))
=(1-n+n-1)(1-n)^{\,n-2}
=0.
\]
Thus every principal submatrix of order $n-1$ is singular. Therefore, $K_n$ has property~(P).
\end{pf}

Several well-known graph families possess property~(P), including complete graphs, bipartite graphs with perfect matchings, and cycles \cite{fons1,Howlader,sharma}. Theorem~\ref{thm:H-join} therefore provides a general mechanism for constructing new graphs with property~(P) from these families. We illustrate this principle through several applications.


Recall that a graph is called a \emph{cograph} (or \emph{complement-reducible graph}) if it contains no induced subgraph isomorphic to \(P_4\). The term \emph{cograph} reflects the fact that every such graph can be reduced to the empty graph by repeatedly taking complements of connected components \cite[p.~344]{west}. Not every cograph has property~(P); for example, \(K_1\) and \(P_3\) do not, while \(K_2\) does. This motivates the search for sufficient conditions under which a cograph has property~(P). Observe that the \(H\)-join of a family of cographs is again a cograph
whenever the index graph \(H\) is a cograph. Combining this observation
with Theorem~\ref{thm:H-join} yields the following corollary.

\begin{cor}\label{cor:cograph-P}
Let \(G\cong H[G_1,\dots,G_k]\), where \(H\) and \(G_1,\dots,G_k\) are cographs. If each \(G_i\) has order at least \(3\) and possesses property~(P), then \(G\) has property~(P).
\end{cor}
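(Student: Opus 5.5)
The corollary follows directly from Theorem~\ref{thm:H-join}; the cograph hypotheses on \(H\) and the \(G_i\) only ensure that \(G\) is itself a cograph, so that the statement is really about cographs. We apply Theorem~\ref{thm:H-join} to the index graph \(H\) on \(\{1,\dots,k\}\) and the family \(\mathcal F=\{G_1,\dots,G_k\}\). We may take the \(G_i\) to be pairwise vertex-disjoint by relabelling vertices, since property~(P) depends only on the isomorphism type: a permutation similarity \(PAP^\top\) preserves nonsingularity, membership in \(S(\cdot)\) up to relabelling, and the vanishing of the diagonal of the inverse. Each \(G_i\) has order at least \(3\) and property~(P), so Theorem~\ref{thm:H-join} gives that \(H[G_1,\dots,G_k]\) has property~(P). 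Since \(G\cong H[G_1,\dots,G_k]\), and property~(P) is invariant under isomorphism for the reason just given, \(G\) has property~(P).

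For completeness, I would also verify the remark preceding the corollary, that \(G\) is a cograph, though the property~(P) conclusion does not depend on it. Suppose \(G\) has an induced \(P_4\) with vertex set \(S\). If \(S\) lies inside a single \(G_i\), this contradicts \(G_i\) being a cograph. Otherwise, consider the map \(S\to V(H)\) sending a vertex to the index of its part. Two vertices in different parts \(i\neq j\) are adjacent in \(G\) exactly when \(ij\in E(H)\).

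\begin{itemize}
\item If \(S\) meets four distinct parts, the image is an induced \(P_4\) in \(H\), which is a contradiction.
\item In the remaining cases some part contains at least two vertices of \(S\) but not all four. Such a set of vertices sharing a part is a module of \(G\) (every vertex outside the part is adjacent to all or none of them). But \(P_4\) is prime: its only modules are trivial, and the intersection of a module of \(G\) with \(S\) is a module of \(G[S]\). This is a contradiction.
\end{itemize}

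The main, and only mild, obstacle is this module argument, together with remembering the isomorphism invariance.
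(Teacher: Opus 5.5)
Your proposal is correct and takes the same route as the paper: the property~(P) conclusion comes directly from Theorem~\ref{thm:H-join}, and the cograph hypotheses serve only to make $G$ itself a cograph. Your module argument that $G$ is a cograph is sound, and so is your note on isomorphism invariance of property~(P); both fill in points the paper asserts or uses without proof.
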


\begin{pf}
Since \(H\) and \(G_1,\dots,G_k\) are cographs, the graph
\(
H[G_1,\dots,G_k]
\)
is a cograph. The conclusion now follows immediately from Theorem~\ref{thm:H-join}.
\end{pf}

As a first consequence, choose each constituent graph to be either a complete graph or a  complete balanced bipartite graph. Both graph families are cographs and possess property~(P).

Consequently, we obtain the following family of complete multipartite graphs
\[
K_{n_1,n_1,\ldots,n_t,n_t}, \ n_i\ge2.
\]
Since
\[
K_{n_1,n_1,\ldots,n_t,n_t}
=
K_t\!\left[
K_{n_1,n_1},
K_{n_2,n_2},
\ldots,
K_{n_t,n_t}
\right],
\]
where \(K_t\) and each \(K_{n_i,n_i}\) are cographs, and every \(K_{n_i,n_i}\) has order at least \(4\) and has property~(P), Corollary~\ref{cor:cograph-P} yields the following result.

\begin{cor}\label{cor:complete-multipartite}
For all integers $n_1,n_2,\ldots,n_t\geq 2$, the complete multipartite graph
$$
K_{n_1,n_1,\ldots,n_t,n_t}
$$
has property~(P).
\end{cor}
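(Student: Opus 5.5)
The plan is to realize $K_{n_1,n_1,\ldots,n_t,n_t}$ as an $H$-join and apply Theorem~\ref{thm:H-join} (equivalently Corollary~\ref{cor:cograph-P}) with $H=K_t$. The identification comes first. In a complete multipartite graph with parts $P_1,P_1',\ldots,P_t,P_t'$, where $|P_i|=|P_i'|=n_i$, two vertices are adjacent exactly when they lie in different parts. Grouping the parts in pairs $V_i=P_i\cup P_i'$, the subgraph induced on $V_i$ is $K_{n_i,n_i}$, and every vertex of $V_i$ is adjacent to every vertex of $V_j$ for $i\neq j$. This is precisely $K_t[K_{n_1,n_1},\ldots,K_{n_t,n_t}]$.

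Next I verify the hypotheses. Since $n_i\ge 2$, each $K_{n_i,n_i}$ has order $2n_i\ge 4\ge 3$. For property~(P) of $K_{n,n}$, I would either cite the result of Sharma and Panda that bipartite graphs with a perfect matching have property~(P), or give a direct matrix. For the direct version, take $A=\begin{pmatrix}\mathbf{0}_n & X\\ X^\top & \mathbf{0}_n\end{pmatrix}$ with $X$ nonsingular and all entries of $X$ nonzero, for instance $X=J_n-2I_n$, which is nonsingular for $n\ge 2$ with $n\neq 2$. For $n=2$, use $X=\begin{pmatrix}1&1\\1&-1\end{pmatrix}$; more generally, take any $X$ with full support and nonzero determinant. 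Then $A\in S(K_{n,n})$, and $A^{-1}=\begin{pmatrix}\mathbf{0}&(X^\top)^{-1}\\ X^{-1}&\mathbf{0}\end{pmatrix}$ has zero diagonal, so every vertex is a P-vertex.

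With these hypotheses in place, Theorem~\ref{thm:H-join} applied with $H=K_t$ gives the conclusion directly; no cograph condition is actually needed. The case $t=1$ is just $K_{n_1,n_1}$ itself. The only point needing care is the existence of a fully supported nonsingular $X$ for every $n\ge 2$. This is immediate from Lemma~\ref{avoidance}, because $\det$ times the product of the entries is a nonzero polynomial in the entries of $X$ and so does not vanish everywhere. I do not expect any real obstacle.
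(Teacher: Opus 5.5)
Your proposal is correct and follows essentially the paper's route: you identify the graph as $K_t[K_{n_1,n_1},\ldots,K_{n_t,n_t}]$ and apply the $H$-join result. The paper passes through Corollary~\ref{cor:cograph-P}, which, as you note, adds nothing beyond Theorem~\ref{thm:H-join}, and it cites property~(P) of $K_{n,n}$ where you instead give a valid explicit realization $\begin{pmatrix}\mathbf{0}&X\\ X^\top&\mathbf{0}\end{pmatrix}$ with $X$ fully supported and nonsingular.
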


\begin{rem}
Recall that the Tur\'an graph \(T_r(N)\) is the complete \(r\)-partite
graph on \(N\) vertices whose partite sets differ in size by at most
one \cite[p.~181]{diestel}. In particular, for \(t\ge1\) and
\(q\ge2\),
\[
T_{2t}(2tq)
\cong
K_{\underbrace{q,q,\ldots,q}_{2t\text{ parts}}}.
\]
Hence, by Corollary~\ref{cor:complete-multipartite}, every balanced
Tur\'an graph with an even number of parts, each of size at least
\(2\), possesses property~\textup{(P)}.
Setting \(q=2\), we obtain
\[
T_{2t}(4t)
\cong
K_{\underbrace{2,2,\ldots,2}_{2t\text{ parts}}},
\]
which is the cocktail party graph on \(4t\) vertices
\cite[p.~68, Exercise~9(b)]{biggs}. Hence, every cocktail party graph
on \(4t\) vertices possesses property~\textup{(P)}.
\end{rem}

Our final application concerns lexicographic products, which arise as a special case of the $H$-join.
Following the notation of \cite[p.~393]{west}, we denote the lexicographic product by $H[G]$. It is obtained by replacing each vertex of $H$ with a copy of $G$. Equivalently, if $|V(H)|=k$, then
$$
H[G] = H[\underbrace{G,\ldots,G}_{k\text{ copies}}].
$$
Hence, the lexicographic product is a special case of the $H$-join, and the following corollary follows from Theorem~\ref{thm:H-join}.

\begin{cor}\label{cor:lexicographic-product}
Let $G$ be a graph of order at least $3$ with property~(P). Then, for every graph $H$, the lexicographic product $H[G]$ has property~(P).
\end{cor}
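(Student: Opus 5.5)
The plan is to realize $H[G]$ as a special case of the $H$-join and then apply Theorem~\ref{thm:H-join} directly. Let $|V(H)|=k$ and label the vertices of $H$ by $\{1,\dots,k\}$. For each $i$, take $G_i$ to be a copy of $G$ on its own vertex set, with the $k$ vertex sets pairwise disjoint. By the definition of the lexicographic product, $H[G]$ has vertex set $V(H)\times V(G)$. Two vertices $(i,u)$ and $(j,v)$ are adjacent exactly when either $ij\in E(H)$, or $i=j$ and $uv\in E(G)$. This is precisely the edge set of $H[G_1,\dots,G_k]$: inside each block the edges of $G_i$ are kept, and every vertex of $G_i$ is joined to every vertex of $G_j$ whenever $ij\in E(H)$. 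Hence $H[G]\cong H[G_1,\dots,G_k]$.

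Next I check the hypotheses of Theorem~\ref{thm:H-join}. Each $G_i$ is isomorphic to $G$, so it has order at least $3$. Property~(P) is invariant under graph isomorphism: if $A\in S(G)$ realizes it, then $P^\top AP$, for the corresponding permutation matrix $P$, lies in $S(G_i)$. This matrix is nonsingular, and its inverse $P^\top A^{-1}P$ still has zero diagonal. So each $G_i$ has property~(P), and Theorem~\ref{thm:H-join} gives that $H[G_1,\dots,G_k]$, and hence $H[G]$, has property~(P). If one wants an explicit matrix, Theorem~\ref{general} supplies a zero-sum inverse realization $A_0$ of $G$. Then the matrix $I_k\otimes A_0+A_H\otimes J_{n}$, with $n=|V(G)|$, realizes property~(P) for $H[G]$, exactly as in the proof of Theorem~\ref{thm:H-join}.

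There is no real obstacle here. The only point needing care is the identification of $H[G]$ with the $H$-join with all constituents equal to $G$. This includes the degenerate cases $k=1$, where $H[G]\cong G$ trivially, and $H$ edgeless, where $H[G]$ is a disjoint union of copies of $G$. Both are covered by Theorem~\ref{thm:H-join}, since that theorem places no restriction on $H$.
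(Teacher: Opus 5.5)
Your proposal is correct and is essentially the paper's argument: the paper also identifies $H[G]$ with the $H$-join $H[G,\dots,G]$ and applies Theorem~\ref{thm:H-join} directly. Your extra remarks are correct but not needed beyond that: isomorphism invariance of property~(P), the explicit realization $I_k\otimes A_0+A_H\otimes J_n$, and the degenerate cases.
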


\begin{example}
Since cycles and complete graphs possess property~(P), it follows from Corollary~\ref{cor:lexicographic-product} that, for every graph \(H\), the lexicographic products \(H[C_n]\) and \(H[K_n]\) possess property~(P) for all \(n \ge 3\).
\end{example}
\section*{Declaration of competing interest}
There is no competing interest.
 
\section*{Acknowledgements}

The research of the first author was carried out with support from  
the ANRF Startup Research Grant SRG/2022/001281, the ANRF Matrics Grant ANRF/ARGM/2025/002777/MTR and RG26271235MARGFX009003 of IIT Madras.
 The research of the second author was supported by the Prime Minister's Research Fellowship (PMRF, ID: 2503482). The research of the third author was supported by the National Board for Higher Mathematics (NBHM) Ph.D. Scholarship (0203/8(28)/2023-R\&D-II), Govt. of India.

\end{document}